\newtheorem{thm}{Theorem}
\newtheorem{cor}[thm]{Corollary}
\newtheorem{lemma}[thm]{Lemma}
\newcommand{\norm}[1]{\left\Vert#1\right\Vert}
\newcommand{\indi}[1]{1_{\left\{#1\right\}}}
\newcommand{\sumn}{\sum_{i=1}^{n}}
\newcommand{\R}{\mathbb{R}}
\newcommand{\eps}{\varepsilon}
\newcommand{\argmax}{\operatornamewithlimits{argmax}}
\newcommand{\ep}[0]{\mathbb{P}}
\newcommand{\CK}{\mathcal{K}}
\newcommand{\CB}{\mathcal{B}}
\def\spacingset#1{\renewcommand{\baselinestretch}%
	{#1}\small\normalsize} \spacingset{1.6}
\begin{document}
	
	\begin{frontmatter}
		

		\title{Semiparametric Analysis of the Proportional Likelihood Ratio Model and Omnibus Estimation Procedure\protect\thanksref{T1}}
		\runtitle{Semiparametric Analysis}
		\thankstext{T1}{This research was supported by Grant No.~2016126 from the United States-Israel Binational Science Foundation (BSF). We would like to thank Anastasios Tsiatis and Eric~J. Tchetgen Tchetgen for very helpful discussions.}
		
		\begin{aug}
			\author{\fnms{Yair} \snm{Goldberg}\ead[label=e1]{yairgo@technion.ac.il}}
			and \author{\fnms{Malka}
				\snm{Gorfine}\ead[label=e2]{gorfinem@tauex.tau.ac.il}}

			\affiliation{Technion and Tel-Aviv University}
			
			\address{Yair Goldberg\\The Faculty of Industrial Engineering and Management\\Technion,  Haifa 3200003, Israel\\ \printead{e1}}

			\address{Malka Gorfine\\Department of Statistics and Operation Research\\ Tel-Aviv University\\
				Ramt Aviv, 69981, Israel\\ \printead{e2}}
			
			\runauthor{Goldberg and Gorfine}
		\end{aug}
		\begin{abstract}
			We provide a semi-parametric analysis for the proportional likelihood ratio model, proposed by Luo \& Tsai (2012). We study the tangent spaces for both the parameter of interest and the nuisance parameter, and obtain an explicit expression for the efficient score function. We propose a family of Z-estimators based on the score functions, including an approximated efficient estimator. Using inverse probability weighting, the proposed estimators can also be applied to different missing-data mechanisms, such as right censored data and non-random sampling. A simulation study that illustrates the finite-sample performance of the estimators is presented.
		\end{abstract}

	\end{frontmatter}

\section{Introduction}\label{sec:intro}
Recently, \citet{luo_proportional_2012} proposed a semi-parametric  proportional likelihood ratio model that extends generalized linear models. The  model assumes that the joint distribution of the response $Y$ and the $q\times 1$ covariate vector $X$ is
\begin{align}\label{eq:model1}
p_{Y,X}(y,x)= p_{Y\mid X} (y \mid x)p_{X}(x) =\frac{ \exp(\beta^T xy) g(y)}{\int  \exp(\beta^T xy) dG(y)}\eta(x)\,,
\end{align}
where $\beta\in\R^{q}$ is the Euclidean parameter of interest and $G(y)$ and $\eta(x)$ are the nuisance parameters. Here $G(y)\equiv P(Y\leq y\mid X=0)$ is a baseline distribution function with density function $g(y)$ with respect to some dominating measure $\nu$; and $\eta(x)\equiv p_{X}(x)$ is the density of $X$ with respect to some dominating measure. A comprehensive discussion of the model interpretation can be found in \citet{luo_proportional_2012}, \cite{chan_nuisance_2013}, and references therein. Semi-parametric maximum likelihood estimators of $\beta$ and $G$ were given by~\citet{luo_proportional_2012} and the convergence of their iterative estimation algorithm was proved by~\citet{davidov_convergence_2013}. 

\cite{chan_nuisance_2013} showed that under a certain missing-data model (which will be discussed later), or when $Y$ is subject to doubly-random truncation, the above model is invariant with respect to $\beta$, but not necessarily with respect to $G$. By cleverly using the pairwise pseudo-likelihood of \citet{liang2000regression}, \cite{chan_nuisance_2013} presented a pseudo-score equation for $\beta$ that is free of the functional parameter $G$, and such that $\beta$ is consistently estimated. This estimator is computationally efficient, but not statistically efficient. Estimation procedures based on right-censored data and on longitudinal data were proposed by  \cite{zhu_likelihood_2014} and \cite{luo_moment-type_2015}, respectively. 

The proportional likelihood ration model~\eqref{eq:model1} is a special case of the prospective likelihood discussed by~\citet{chen2003note} in the context of outcome-dependent samples. \citet[][Eq.~5]{chen2003note} considered the model 
\begin{align*}
p_{Y\mid X} (y \mid x) =\frac{\rho(y,y_0,x,x_0,\beta) g(y)}{\int  \rho(y,y_0,x,x_0,\beta) dG(y)}\,,
\end{align*}
where 
\begin{align}\label{eq:generalized_odds_ratio}
\rho(y,y_0,x,x_0,\beta)=\frac{p_{Y\mid X}(y\mid x)}{p_{Y\mid X}(y\mid x_0)} \left\{\frac{p_{Y\mid X}(y_0\mid x)}{p_{Y\mid X}(y_0\mid x_0)}\right\}^{-1}\,,
\end{align}
and $(y_0, x_0)$  is a sample point. In this model, $g(y)=p_{Y\mid X,R}(y|x_0,1)$ and $R$ is the sampling indicator, taking value $1$ if it is included in the
sample and $0$ otherwise. The function $\rho$ is referred to as the generalized odds ratio \citep{liang2000regression}. For generalized linear models with canonical link function, and taking $(y_0,x_0)\equiv(0,0)$, $\rho$ has the from $\exp\{\beta^Txy\}$. \citet{chen2007semiparametric} and \citet{tchetgen2010doubly} further extended~\eqref{eq:generalized_odds_ratio} by allowing conditioning on an additional covariate vector $Z$. They study the nuisance tangent space for this model. They also considered three parametric models,  for $\rho $, $p_{Y\mid X,Z}(y\mid x_0,z)$, and $p_{X \mid  Y,Z}(x \mid y_0,z)$. They then proposed doubly robust estimators which are consistent when the model for $\rho$ is correct and either $p_{Y\mid X,Z}(y\mid x_0,z)$ or $p_{X \mid  Y,Z}(x \mid y_0,z)$ is correctly specified.

The contribution of this work is twofold. First, it provides a comprehensive semi-parametric analysis of the \citeauthor{luo_proportional_2012}'s proportional likelihood model \eqref{eq:model1} above. The proofs involve projections in Hilbert spaces and solving integral equations. Second, the semi-parametric theory we develop in this work yields an omnibus estimation procedure including problems previously studied separately, such as missing data, doubly-truncated or censored data. Under the missing-data setting, the proposed estimation approach is not limited to the specific missing-data model of \cite{chan_nuisance_2013}. Moreover, in contrast to \citet{chen2007semiparametric}, \citet{tchetgen2010doubly}, and  \cite{chan_nuisance_2013}, where only  $\beta$ is estimated, we nonparametricly estimate $G$ as well, which makes our approach useful also for prediction. The utility of our novel estimation procedure is demonstrated via extensive simulation study. Efficient implementation of the proposed estimation procedure, as well as of that of \citet{luo_proportional_2012} and \cite{chan_nuisance_2013}, is implemented in the R package PLR (which can be freely downloaded from \url{https://github.com/yairgoldy/PLR}).

\section{Semi-parametric Analysis}\label{sec:theory}
The proportional likelihood ratio semi-parametric model can be written as the set of densities
\begin{align*}
\mathcal{P}=&\LARGE\{p_{Y,X}\{y,x;\beta,g(\cdot),\eta(\cdot)\}\Large\}=\LARGE\{p_{Y\mid  X}\{y\mid  x;\beta,g(\cdot)\}\eta(x) \LARGE\}
\end{align*}
where $g(y)$ and $ \eta(x)$ are the nuisance parameters. The respective true values of the parameters are denoted by $\beta_0$, $g_0(\cdot)$, and $\eta_0(\cdot)$. In the sequel, we assume the standard smoothness and regularity conditions \citep[see, for example,][Definition~A1]{newey1990semiparametric}. Let $\mathcal{H}$ denote the tangent space for $\mathcal{P}$, where $\mathcal{H}$ is the Hilbert space of all $q$-dimensional random functions $a(Y,X)$ that satisfy $E\{a(Y,X)\}=0$ and have finite variance, equipped with the inner product $\langle a_1,a_2\rangle =E(a_1^T a_2)$. Let $\Lambda\subset \mathcal{H}$ be the nuisance tangent space with respect to the parameters $g_0(\cdot)$ and $\eta_0(\cdot)$ \citep[see][Chapter~4, Defintion 1]{Tsiatis2006Semiparametric}. The marginal density of $Y$ is 
\begin{align*}
p_Y(y)=g(y)\int\frac{ \exp(\beta^T  xy) \eta(x)}{\int  \exp(\beta^T  xy) dG(y)}dx\, ,
\end{align*}
where $g(y)$ is the density of $Y$ given $ X=0$.

We start by calculating the nuisance parameters tangent space, motivated by Theorem 4.2 of \cite{Tsiatis2006Semiparametric} which states that the influence function of any asymptotically linear and regular (RAL) estimator is orthogonal to the nuisance tangent space. We then show how to calculate the projection of any score function on the nuisance tangent space. As a result, we are able to provide an explicit representation of the efficient score,  which is the projection of the score function with respect to $\beta$ on the orthogonal complement of the nuisance tangent space \citep[see][Definition 4.2]{Tsiatis2006Semiparametric}. While in practice the projections are difficult to compute since they  are an infinite sequences of alternating expectations, based on an approximately-projected scores, we provide a novel family of estimators.

\begin{lemma}\label{lem:lambda_12}
	Let $\Lambda_1$ and $\Lambda_2$ be the nuisance tangent spaces with respect to $g(\cdot)$ and $\eta(\cdot)$, respectively. 
	Then,
	\begin{align*}
	&\text{(i)}\quad  \Lambda_1 = \left\{h(Y)-E\{h(Y)\mid  X\}\,:\, h(Y)\text{ is a $q$-dimensional vector-valued function}\right\}\\
	&\text{(ii)}\quad  \Lambda_2 = \left\{\alpha(X) \,:\, \alpha(X)\text{ is a $q$-dimensional vector-valued function such that } E\{\alpha(X)\}=0\right\}
	\end{align*}
\end{lemma}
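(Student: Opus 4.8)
The plan is to compute each nuisance tangent space as the mean-square closure of the scores obtained from one-dimensional regular parametric submodels that perturb only the relevant nuisance parameter. For part (ii), the space $\Lambda_2$ associated with $\eta(\cdot)$, I would proceed as in the classical ``marginal density is unrestricted'' argument: since $\eta(x)=p_X(x)$ enters the full density \eqref{eq:model1} as a separate factor, a parametric submodel $\eta_t(x)$ with $\eta_0=\eta$ contributes score $\partial_t\log\eta_t(x)\big|_{t=0}=\alpha(x)$, which is an arbitrary (square-integrable, $q$-dimensional) function of $X$ alone subject only to $E\{\alpha(X)\}=0$ (the mean-zero constraint coming from differentiating $\int\eta_t\,d\mu=1$). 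Conversely every such $\alpha(X)$ is realized by a submodel, e.g. $\eta_t(x)=\eta(x)\{1+t\alpha(x)\}$ locally, so $\Lambda_2$ is exactly the stated set; one should note this set is already closed in $\mathcal H$, so no closure operation is needed.

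For part (i), the space $\Lambda_1$ associated with $g(\cdot)$: a submodel $g_t$ with $g_0=g$ has a score $s(y)=\partial_t\log g_t(y)\big|_{t=0}$ satisfying $\int s\,dG=0$, so $s(Y)$ ranges over mean-zero square-integrable functions of $Y$. But the score of the \emph{full} model along this submodel is not $s(Y)$ itself, because $g$ also appears in the normalizing integral $\int\exp(\beta^Txy)\,dG(y)$ in the denominator of $p_{Y\mid X}$. Differentiating $\log p_{Y,X}\{y,x;\beta,g_t,\eta\}$ in $t$ gives
\[
s(y)\;-\;\frac{\int \exp(\beta^Txy')\,s(y')\,dG(y')}{\int \exp(\beta^Txy')\,dG(y')}\;=\;s(y)-E\{s(Y)\mid X=x\},
\]
where the last equality uses that the conditional law of $Y$ given $X=x$ has density proportional to $\exp(\beta^Txy)g(y)$. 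Thus every nuisance score of this type has the form $h(Y)-E\{h(Y)\mid X\}$ with $h=s$ mean-zero; and conversely, for an arbitrary $q$-dimensional $h(Y)$ (with finite variance) one can absorb $E\{h(Y)\}$ and take $s=h-E\{h(Y)\}$, noting $h(Y)-E\{h(Y)\mid X\}$ is unchanged by adding a constant to $h$, so the two descriptions coincide. Taking mean-square closure over all such submodels then yields the displayed $\Lambda_1$, after checking that this linear set is already closed — which follows because $h\mapsto h(Y)-E\{h(Y)\mid X\}$ is a bounded idempotent (a conditional-expectation-type projection) on $\mathcal H$, hence has closed range.

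The main obstacle I anticipate is the bookkeeping around the normalizing constant: one must be careful that the full-model score along a perturbation of $g$ is the ``centered'' object $h(Y)-E\{h(Y)\mid X\}$ rather than $h(Y)$, and that this centering is exactly a conditional expectation under the model-implied conditional density $\propto\exp(\beta^Txy)g(y)$ — this is what makes the two factors of the likelihood interact and is the one genuinely model-specific computation. A secondary technical point is justifying that $\Lambda_1$ and $\Lambda_2$ as written are closed subspaces (so that they literally equal the tangent spaces, not merely dense subsets), which I would handle via the projection/bounded-operator argument sketched above; interchange of differentiation and integration is routine under the assumed regularity conditions and I would not belabor it.
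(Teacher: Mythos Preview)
Your proposal is correct and follows essentially the same route as the paper: compute scores of parametric submodels perturbing $g$, observe that differentiating the normalizing constant yields the conditional centering $h(Y)-E\{h(Y)\mid X\}$, and handle the reverse inclusion via an arbitrary parametric submodel of $g$. The only differences are cosmetic---the paper uses an explicit $q$-dimensional exponential-tilt family $g_0(y)\exp\{h(y)^T\gamma\}$ in place of your generic one-parameter path, dispatches part~(ii) by simply citing Theorem~4.6 of \citet{Tsiatis2006Semiparametric}, and does not address the closure issue you flag.
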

The proof of Lemma \ref{lem:lambda_12} is provided in Appendix 1.1. The nuisance parameters $\eta(\cdot)$ and $g(\cdot)$ are variationally independent, that is, any choice of $\eta$ and $g$ results in a density in the model $\mathcal{P}$  \citep[see definition at][page 53]{Tsiatis2006Semiparametric}. Moreover, we have the following result.
\begin{lemma}
	The spaces $\Lambda_1$ and $\Lambda_2$ are orthogonal.
\end{lemma}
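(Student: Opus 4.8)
The plan is to verify the definition of orthogonality directly: show that $\langle a_1,a_2\rangle = E(a_1^T a_2) = 0$ for every $a_1\in\Lambda_1$ and every $a_2\in\Lambda_2$, using the explicit descriptions from Lemma~\ref{lem:lambda_12} together with the tower property of conditional expectation.

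First I would fix arbitrary elements. By Lemma~\ref{lem:lambda_12}, write $a_1 = h(Y)-E\{h(Y)\mid X\}$ for some $q$-dimensional function $h$, and $a_2=\alpha(X)$ with $E\{\alpha(X)\}=0$. Both lie in $\mathcal{H}$, hence have finite second moments, so by the Cauchy--Schwarz inequality $E(a_1^T a_2)$ is finite and well defined; this is the only integrability point that needs checking. Next I would condition on $X$ inside the expectation. Since $a_2=\alpha(X)$ is a function of $X$, it factors out of the inner conditional expectation, giving
\[
\langle a_1,a_2\rangle = E\Big[\,E\{h(Y)-E[h(Y)\mid X]\mid X\}^{T}\,\alpha(X)\,\Big].
\]
The inner conditional expectation equals $E\{h(Y)\mid X\}-E\{h(Y)\mid X\}$, the zero vector, so the whole expression vanishes. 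Since $a_1$ and $a_2$ were arbitrary, $\Lambda_1\perp\Lambda_2$.

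I do not expect a genuine obstacle here: the claim is essentially the standard fact that a residual that is mean zero given $X$ is orthogonal in $L_2$ to any square-integrable function of $X$. The one mild subtlety worth recording is that elements of $\Lambda_1$ and $\Lambda_2$ are a priori only required to belong to $\mathcal{H}$ --- mean zero with finite variance --- which is precisely what legitimizes the use of Cauchy--Schwarz and the conditioning step above. It may also be worth noting the payoff: combined with the variational independence of $g(\cdot)$ and $\eta(\cdot)$, this orthogonality allows the full nuisance tangent space to be written as the orthogonal direct sum $\Lambda = \Lambda_1\oplus\Lambda_2$, which is what makes the subsequent projection computations tractable.
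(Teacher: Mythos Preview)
Your proposal is correct and matches the paper's own proof almost verbatim: both fix arbitrary $h(Y)-E\{h(Y)\mid X\}\in\Lambda_1$ and $\alpha(X)\in\Lambda_2$, condition on $X$ inside the expectation, and observe that the inner conditional expectation vanishes. Your added remarks on integrability and on the $\Lambda=\Lambda_1\oplus\Lambda_2$ payoff are helpful context but not part of the paper's terse argument.
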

\begin{proof}
	For every $h(Y)-E\{h(Y)\mid  X\}\in\Lambda_1$ and $\alpha(X)\in \Lambda_2$,
	\begin{align*}
	E\left([h(Y)-E\{h(Y)\mid  X\}]\alpha(X)\right)=&E\{E\left[(h(Y)-E\{h(Y)\mid  X]\}\alpha(X)\mid  X\right]\}\\
	=&E\left([E\{h(Y)\mid  X\}-E\{h(Y)\mid  X\}]\alpha(X)\right)=0,
	\end{align*}
	as needed.
\end{proof}

By Theorem 5.2 of \citet{Tsiatis2006Semiparametric}, the projection of a function $a(Y,X)$ on $\Lambda$, denoted by $\Pi(a\mid\Lambda)$, can be written as $\Pi(a\mid \Lambda)=\Pi(a\mid \Lambda_1)+\Pi(a\mid \Lambda_2)$. In the following, the projection of $a$ on each nuisance tangent space is computed. Let 
\begin{align*}
\CK \{a(Y,X)\} = E \{ a(Y,X) \mid Y\} - 
E [  E \{ a(Y,X) \mid X \} \mid Y ] \, .
\end{align*}
Define the linear operator $\CB$ by $
\CB\{f(Y) \} = E[E \{ f(Y)|X \}|Y]$,
and let $I:\mathcal{H}\mapsto \mathcal{H}$  denotes the identity mapping. 
\begin{thm}\label{thm:projections}
	The respective projections of $a(Y,X)$ on $\Lambda_1$ and $\Lambda_2$ are
	\begin{align*}
	&(i)\quad    \Pi\{a(Y,X)\mid \Lambda_1\}=
	\left(I-\CB \right)^{-1} \CK\{a(Y,X)\}- E\left[\left(I-\CB \right)^{-1} \CK\{a(Y,X)\}\mid X\right]\, , 
	\\
	&(ii)\quad\Pi\{a(Y,X)\mid \Lambda_2\}= E\{a(Y,X)\mid X\}\,. 
	\end{align*}
\end{thm}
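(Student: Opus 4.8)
The plan is the standard Hilbert-space characterisation of a projection: for each of the two claimed formulas I will check that it lies in the corresponding nuisance tangent space and that the induced residual is orthogonal to that space; since $\Lambda_1$ and $\Lambda_2$ are (mean-square) closed subspaces of $\mathcal{H}$, this pins down $\Pi(a\mid\Lambda_1)$ and $\Pi(a\mid\Lambda_2)$ uniquely. All the objects in play — the two tangent spaces, the operators $\CK$ and $\CB$, and the conditional expectations — act coordinatewise on $q$-dimensional functions, so it is enough to argue for scalar-valued $a$.

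Part (ii) is immediate. Because $a\in\mathcal{H}$, $E[E\{a\mid X\}]=E(a)=0$, so $E\{a\mid X\}\in\Lambda_2$. For any $\alpha(X)$ with $E\{\alpha(X)\}=0$, conditioning on $X$ gives $E[(a-E\{a\mid X\})\alpha(X)]=E[(E\{a\mid X\}-E\{a\mid X\})\alpha(X)]=0$, so the residual is orthogonal to $\Lambda_2$ and the claim follows.

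For part (i), write $\phi(Y)=(I-\CB)^{-1}\CK\{a(Y,X)\}$, a square-integrable function of $Y$ under the standing regularity conditions (note that $\CK\{a\}$ has mean zero, that $\CB$ is a self-adjoint contraction preserving mean-zero functions of $Y$, and that $I-\CB$ is assumed invertible there, i.e.\ the equation $(I-\CB)\phi=\CK\{a\}$ is solvable). The claimed projection $\phi(Y)-E\{\phi(Y)\mid X\}$ is visibly of the form $h(Y)-E\{h(Y)\mid X\}$, hence in $\Lambda_1$. It remains to show the residual $a-\phi(Y)+E\{\phi(Y)\mid X\}$ is orthogonal to every $\tilde h(Y)-E\{\tilde h(Y)\mid X\}\in\Lambda_1$. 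Expanding this inner product into its six terms and repeatedly using the swap identities $E[f(Y)u(X)]=E[E\{f\mid X\}\,u(X)]=E[f(Y)\,E\{u\mid Y\}]$, one checks that the two terms carrying the factor $E\{\tilde h\mid X\}E\{\phi\mid X\}$ cancel, that $E[a\,E\{\tilde h\mid X\}]=E\big[E[E\{a\mid X\}\mid Y]\,\tilde h(Y)\big]$, and that $E[E\{\phi\mid X\}\,\tilde h(Y)]=E[(\CB\phi)(Y)\,\tilde h(Y)]$. Collecting the survivors, the inner product becomes
\begin{align*}
E\Big[\big(E\{a\mid Y\}-E[E\{a\mid X\}\mid Y]-\phi(Y)+(\CB\phi)(Y)\big)\,\tilde h(Y)\Big]
= E\Big[\big(\CK\{a\}-(I-\CB)\phi\big)(Y)\,\tilde h(Y)\Big],
\end{align*}
which vanishes since $(I-\CB)\phi=\CK\{a\}$ by the definition of $\phi$. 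By uniqueness of the projection this establishes (i).

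The genuinely delicate point is not the algebra — which is just careful bookkeeping of the adjoint relation between $E\{\cdot\mid Y\}$ and $E\{\cdot\mid X\}$ in the six-term expansion — but the well-posedness of $(I-\CB)^{-1}$ on the mean-zero functions of $Y$, equivalently the solvability of the Fredholm-type equation $(I-\CB)\phi=\CK\{a\}$. This is where the smoothness and regularity (in particular completeness-type) assumptions enter; in the write-up I would either invoke those conditions directly or note that, $\CB$ being a self-adjoint contraction, $(I-\CB)^{-1}$ is delivered by a Neumann series once $1$ is excluded from the relevant part of the spectrum. Closedness of $\Lambda_1$, needed for the projection to exist in the first place, is likewise part of the standing assumptions established alongside Lemma~\ref{lem:lambda_12}.
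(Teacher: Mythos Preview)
Your verification of (ii) and the algebraic reduction of (i) to the equation $(I-\CB)\phi=\CK\{a\}$ are correct and match the paper's route. The only structural difference is cosmetic: the paper first uses $\Lambda_1\perp\Lambda_2$ to replace $a$ by $a-E\{a\mid X\}\in\Lambda_2^\bot$ and then reads the orthogonality condition as $E\{\,\cdot\mid Y\}=0$, whereas you check orthogonality to all of $\Lambda_1$ directly via the six-term expansion. The bookkeeping collapses to the same integral equation either way.

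The substantive gap is exactly where you flag it. You treat the existence of $(I-\CB)^{-1}$ as a consequence of unstated ``completeness-type'' regularity conditions, or as following once $1$ is excluded from the spectrum --- but you do not exclude it. The paper does not assume this away; it \emph{proves} that $\CB$ is a strict contraction on mean-zero functions, i.e.\ $\|\CB\|\le 1-\epsilon$ for some $\epsilon>0$, so that the Neumann series $\sum_{j\ge 0}\CB^j$ converges and delivers the inverse (via Lemma~10.5 of \citet{Tsiatis2006Semiparametric}). The argument is by contradiction: if $\|\CB a_n\|\to 1$ along a sequence with $\|a_n\|=1$, pass to a weakly convergent subsequence (Alaoglu) with limit $a_0$; the chain $\|\CB a_0\|\le\|\Pi(a_0\mid\Lambda_X)\|\le\|a_0\|$ then holds with equalities, forcing $a_0$ to be simultaneously a function of $Y$ alone and of $X$ alone, hence constant, hence zero --- a contradiction. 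This strict-contraction step is the analytical heart of the theorem, and your proposal currently defers rather than supplies it.
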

See the detailed proof in the Appendix. The following statement is a direct consequence of Theorem~\ref{thm:projections}, and  the cornerstone  for generating the proposed omnibus estimation procedure.

\begin{cor}\label{cor:proj_on_lambda_bot}
	Let $\Lambda^\bot$ be the orthogonal complement of the nuisance tangent space $\Lambda$ in $\mathcal{H}$. Then
	\begin{align*}
	\Pi \{ a(Y,X)\mid \Lambda^\bot \} =  &\, a(Y,X)-E\{  a(Y,X)\mid X\}  
	\\
	&\qquad- \sum_{j=0}^\infty \left(\CB^j \CK\{a(Y,X)\}- E\left[\CB^j \CK\{a(Y,X)\}\mid X\right]\right)
	\,.
	\end{align*}
\end{cor}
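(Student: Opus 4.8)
The plan is to start from the additive decomposition $\Pi\{a\mid\Lambda\}=\Pi\{a\mid\Lambda_1\}+\Pi\{a\mid\Lambda_2\}$ — valid because $\Lambda_1$ and $\Lambda_2$ are orthogonal closed subspaces, so $\Lambda=\Lambda_1+\Lambda_2$ is itself closed and $\Pi\{a\mid\Lambda^\bot\}=a-\Pi\{a\mid\Lambda\}$ — then substitute the two explicit formulas from Theorem~\ref{thm:projections}, and finally rewrite the resolvent $(I-\CB)^{-1}$ as a Neumann series. Performing the substitution gives
\begin{align*}
\Pi\{a(Y,X)\mid\Lambda^\bot\}={}&a(Y,X)-E\{a(Y,X)\mid X\}\\
&-(I-\CB)^{-1}\CK\{a(Y,X)\}+E\bigl[(I-\CB)^{-1}\CK\{a(Y,X)\}\mid X\bigr],
\end{align*}
so the entire task reduces to expanding the last two terms into the stated series.

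Next I would justify that Neumann expansion. The function $\CK\{a(Y,X)\}$ is a mean-zero function of $Y$: it is a function of $Y$ by the definition of $\CK$, and $E[\CK\{a(Y,X)\}]=E\{a(Y,X)\}-E\{a(Y,X)\}=0$; moreover $\CB$ maps mean-zero functions of $Y$ into mean-zero functions of $Y$. On that subspace $\CB\{f(Y)\}=E[E\{f(Y)\mid X\}\mid Y]$ is a composition of two conditional-expectation operators, i.e.\ orthogonal projections in $L^2$, so $\| \CB \|$ there is at most the maximal correlation coefficient between $X$ and $Y$, which the regularity conditions keep strictly below one — this is exactly what makes $I-\CB$ boundedly invertible there, as already used in Theorem~\ref{thm:projections}. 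Hence $(I-\CB)^{-1}\CK\{a(Y,X)\}=\sum_{j=0}^\infty\CB^j\CK\{a(Y,X)\}$, with convergence in $\mathcal{H}$. Since $f\mapsto E\{f\mid X\}$ is a bounded linear operator on $\mathcal{H}$, it commutes with the convergent sum, so $E[(I-\CB)^{-1}\CK\{a(Y,X)\}\mid X]=\sum_{j=0}^\infty E[\CB^j\CK\{a(Y,X)\}\mid X]$; subtracting the two convergent series term by term produces $\sum_{j=0}^\infty\bigl(\CB^j\CK\{a(Y,X)\}-E[\CB^j\CK\{a(Y,X)\}\mid X]\bigr)$, which is the claimed expression.

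The only genuinely delicate point is this Neumann expansion — that $(I-\CB)^{-1}$, applied to $\CK\{a(Y,X)\}$, equals $\sum_{j\ge0}\CB^j\CK\{a(Y,X)\}$. I would dispatch it by recording, among the regularity conditions, that the maximal correlation between $X$ and $Y$ is bounded away from $1$ (equivalently, that $\CB$ restricted to centered functions of $Y$ has operator norm strictly less than one), which simultaneously secures the bounded inverse invoked in Theorem~\ref{thm:projections} and furnishes operator-norm convergence of the series on the $\CB$-invariant subspace of centered functions of $Y$, which contains the range of $\CK$. Everything else — closedness of $\Lambda=\Lambda_1+\Lambda_2$, the additive projection identity, the substitution, and the termwise handling of the two series — is routine.
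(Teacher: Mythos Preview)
Your proposal is correct and follows essentially the same route as the paper: the corollary is stated there as a direct consequence of Theorem~\ref{thm:projections}, whose proof already establishes the Neumann-series identity $(I-\CB)^{-1}\CK\{a\}=\sum_{j\ge 0}\CB^j\CK\{a\}$ (via a contraction argument and Lemma~10.5 of Tsiatis), so your substitution-plus-expansion is exactly what the paper has in mind. The only cosmetic difference is that you justify $\|\CB\|<1$ through the maximal-correlation coefficient, whereas the paper's proof of Theorem~\ref{thm:projections} reaches the same contraction conclusion by a weak-compactness (Alaoglu) argument; either suffices, and for the corollary itself you may simply cite that the series representation was already obtained in the proof of Theorem~\ref{thm:projections}.
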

Similar result was also obtained by \citet[][see the Appendix there]{chen2007semiparametric} who used the prospective and retrospective likelihoods to shows that the space $\Lambda$ can be written as an intersection between two linear subspaces of $\mathcal{H}$. He then used von-Neumann projection theorem \citep[][Theorem A.4.1]{bickel1993efficient} to calculate the projection. This is different from the above theorem, as
here we explicitly compute the nuisance tangent spaces with respect to $g(\cdot)$ and $\eta(\cdot)$.

Let $S_\beta(Y,X)$ be the score function for $\beta$, namely, the derivative of $\log p_{Y,X}\{y,x;\beta,g_0(\cdot),\eta_0(\cdot)\}$ with respect to $\beta$ evaluated at the true parameter value $\beta_0$. 
The efficient score function, $S_{\mathrm{eff}}(Y,X)$, is defined as the projection of $S_\beta(Y,X)$ on $\Lambda^\perp$. 
\begin{lemma}\label{lem:efficient_score}
	The efficient score $S_{\mathrm{eff}}(Y,X)$ equals
	$$XY- E(XY\mid X) - \sum_{j=0}^\infty \left[\CB^j \CK(XY)-E\{\CB^j \CK(XY)\mid X\}\right]\,. $$
\end{lemma}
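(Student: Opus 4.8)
The plan is to obtain $S_{\mathrm{eff}}$ directly from Corollary~\ref{cor:proj_on_lambda_bot}: since by definition $S_{\mathrm{eff}}(Y,X)=\Pi\{S_\beta(Y,X)\mid\Lambda^\perp\}$, it suffices to (i) identify the score $S_\beta$ in closed form and (ii) simplify the two quantities $E\{S_\beta\mid X\}$ and $\CK\{S_\beta\}$ that enter that corollary.

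First I would compute the score. The log-density of a single observation under \eqref{eq:model1} is $\beta^T x y+\log g(y)-\log\!\int\exp(\beta^T x y)\,dG(y)+\log\eta(x)$, which is differentiable in $\beta$, and interchanging differentiation and integration is legitimate under the regularity conditions assumed in Section~\ref{sec:theory}. The $\log\eta(x)$ term contributes nothing, while the derivative of the log normalizing constant equals
\[
\frac{\int xy\,\exp(\beta^T xy)\,dG(y)}{\int\exp(\beta^T xy)\,dG(y)}=x\,E(Y\mid X=x)=E(XY\mid X=x),
\]
the last two equalities holding because $p_{Y\mid X}(y\mid x)\propto\exp(\beta^T xy)g(y)$. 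Hence, evaluated at the true parameter,
\[
S_\beta(Y,X)=XY-E(XY\mid X).
\]
In particular $E\{S_\beta(Y,X)\mid X\}=0$, so in the formula of Corollary~\ref{cor:proj_on_lambda_bot} taken with $a=S_\beta$ the first two terms collapse to $S_\beta(Y,X)-0=XY-E(XY\mid X)$.

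Next I would simplify $\CK\{S_\beta\}$. By linearity of $\CK$ we have $\CK\{S_\beta\}=\CK(XY)-\CK\{E(XY\mid X)\}$, and I claim the second term vanishes. Indeed, for any square-integrable function $f(X)$ of $X$ alone we have $E\{f(X)\mid X\}=f(X)$, hence $E[\,E\{f(X)\mid X\}\mid Y\,]=E\{f(X)\mid Y\}$, so $\CK\{f(X)\}=E\{f(X)\mid Y\}-E\{f(X)\mid Y\}=0$; this is consistent with the fact that $\CK$ is built to capture only the $\Lambda_1$-relevant part of its argument, and $\Lambda_1\perp\Lambda_2$. Applying this with $f(X)=E(XY\mid X)$ gives $\CK\{S_\beta\}=\CK(XY)$, and therefore $\CB^j\CK\{S_\beta\}=\CB^j\CK(XY)$ for every $j\ge0$.

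Substituting these two simplifications into Corollary~\ref{cor:proj_on_lambda_bot} then yields
\[
S_{\mathrm{eff}}(Y,X)=XY-E(XY\mid X)-\sum_{j=0}^\infty\Bigl(\CB^j\CK(XY)-E\bigl[\CB^j\CK(XY)\mid X\bigr]\Bigr),
\]
which is the asserted expression. I do not anticipate a genuine difficulty: once Corollary~\ref{cor:proj_on_lambda_bot} is available the argument is essentially bookkeeping, and the steps most in need of (still routine) care are the interchange of derivative and integral in computing $S_\beta$ and the verification that $\CK$ annihilates functions of $X$; convergence of the infinite series is not an issue here, as it has already been addressed via $(I-\CB)^{-1}$ in Theorem~\ref{thm:projections} and its corollary.
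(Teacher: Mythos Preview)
Your proposal is correct and follows the same route as the paper: compute $S_\beta(Y,X)=XY-E(XY\mid X)$ by differentiating the log-density, then invoke Corollary~\ref{cor:proj_on_lambda_bot}. The paper's proof simply states ``The result follows from Corollary~\ref{cor:proj_on_lambda_bot}'' without spelling out the simplifications $E\{S_\beta\mid X\}=0$ and $\CK\{S_\beta\}=\CK(XY)$ that you carefully verify, so your write-up is in fact more complete than the original.
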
  
See proof in the Appendix. Since the projection of $a(Y,X)$ on $\Lambda_1$ is an infinite series where the norm of subsequent terms decrease, we approximate the projection onto the nuisance tangent space by using only the first few terms. Our proposed approximated efficient score  is defined by 
\begin{align*}
XY- E(XY\mid X) - E(XY \mid Y) + E \{ E(XY \mid Y) \mid X \} \,.
\end{align*}
The asymptotic and finite-sample properties are studied in the following sections.

\section{Estimation}
In this section we propose a family of estimators for the parameter of interest $\beta_0$ using the theory developed in Section~\ref{sec:theory}. An estimator $\widehat{\beta}$ for  $\beta_0$ is called asymptotically linear if there exists a $q$-dimensional random vector $\varphi(Y,X)$, such that $E\{\varphi(Y,X)\}=0^{q\times 1}$, and $n^{1/2}(\widehat{\beta}-\beta_0)=n^{-1/2}\sum_{i=1}^n \varphi(Y_i,X_i)+o_p(1)$ such that $E\{ \varphi(Y,X)\varphi(Y,X)^T\}$ is finite and nonsingular \citep[][Chapter~3]{Tsiatis2006Semiparametric}. By Theorem~4.2 of \cite{Tsiatis2006Semiparametric}, if $\varphi$ is an influence function for an estimator~$\widehat{\beta}$, then~$\varphi$ is orthogonal to the nuisance
tangent space~$\Lambda$, or  more formally,
\begin{align*}
\Pi\{\varphi(Y,X)\mid \Lambda\}\equiv 0^{q\times 1}\,.
\end{align*}
By Theorem 4.3 of \citet{Tsiatis2006Semiparametric}, every  regular asymptotically linear (RAL)  estimator for $\beta_0$ has a unique influence function. Therefore, we propose a family of Z-estimators, based on their influence functions using the fact that the influence functions must lie in $\Lambda^{\bot}$. 

Fix any function $a(Y,X)$, $a:\R\times \R^q\mapsto \R^q$; then $a(Y,X)-E\{a(Y,X)\}\in\mathcal{H}$.
Let 
\begin{align*}
m_{0,a}(y,x) &\equiv a(y,x)-E\{  a(Y,X)\mid X=x\}  
- \CK\{a(y,x)\} \\
&=a(x,y)- E\{a(X,Y)\mid X=x\} 
- E\{a(X,Y) \mid Y=y\} \\
&\quad+ E [E
\{a(X,Y)  \mid Y\} \mid X=x] 
\,.
\end{align*}  
The function $m_{0,a}(y,x)$ is an approximated projection of $a$ on $\Lambda^\bot$. Let
\begin{align}\label{eq:S_U_Vcontinuous}
\begin{split}
S_{a}(x,\beta, G)&=\int a(s,x) \exp(\beta^T xs)dG(s)\,,
\\
U_{a}(y,\beta,G)&=\int\frac{ a(y,x) \exp(\beta^T  xy) \eta(x)}{S_{1}(x,\beta,G)}dx\,,
\\
V_{a}(x,\beta,G)&=\frac{1}{S_{1}(x,\beta,G)}\int \frac{U_{a}(y,\beta,G)}{U_{1}(y,\beta,G)} \exp(\beta^T xy)dG(y)\,,
\end{split}
\end{align}
where $S_1$ and $U_1$ are the functions $S_{a}$ and $U_{a}$, respectively, for $a\equiv 1$.
Using the definition of the density $g(\cdot)$ and some algebraic manipulations, it can be shown that 
\begin{align*}
&E\{a(Y,X)\mid X=x\} =\frac{S_{a}(x,\beta_0,G_0)}{S_{1}(x,\beta_0,G_0)}\,,
\\
&E\{a(X,Y) \mid Y=y\} =\frac{U_{a}(y,\beta_0,G_0)}{U_{ 1}(y,\beta_0,G)}\,,
\end{align*}
and
\begin{align*}
&E [E
\{a(X,Y)  \mid Y\} \mid X=x] = V_{a}(x,\beta_0 ,G_0)\,.
\end{align*}
Let
\begin{align}\label{eq:m_with_suv}
m_{a}(y,x,\beta,G)= a(y,x)-\frac{S_{a}(x,\beta,G)}{S_{1}(x,\beta,G)} -\frac{U_{a}(y,\beta,G)}{U_{1}(y,\beta,G)} +V_{a}(x,\beta,G)\,,
\end{align}
and note that, by the above discussion, $m_{0,a}(y,x)=m_{a}(y,x,\beta_0,G_0)$. The function $m_{a}(y,x,\beta,G) $ is used for defining the estimating equations.

Suppose that we observe independent and identically distributed random pairs $(Y_1,X_1),\ldots,(Y_n,X_n)$  from the distribution function $p_{Y,X}\{y,x;\beta_0,g_0(\cdot),\eta_0(\cdot)\}$. Let $Y_{(1)}, \ldots, Y_{(K)}$ be the ordered distinct observed values of $Y$. For a fixed value of $\beta$, by Theorem~2 of \cite{luo_proportional_2012}, the profile likelihood maximizer for $G$, denote by $\widehat{G}_{\beta}$, has jumps $\widehat{p}(\beta)=\{\widehat{p}_1(\beta),\ldots,\widehat{p}_K(\beta)\}^T$ at $Y_{(1)}, \ldots, Y_{(K)}$. For a vector of probabilities $p\in\R^{K}$, write
\begin{align}\label{eq:S_U_V}
\begin{split}
\widehat{S}_{a}(x,\beta,p)&=\sum_{k=1}^K a(Y_{(k)},x) \exp(\beta^T x Y_{(k)})p_k\,,
\\
\widehat{U}_{a}(y,\beta,p)&=\frac{1}{n}\sum_{i=1}^n  \frac{a(y,X_i)\exp(\beta^T X_i y)}{\widehat{S}_{1}(X_i,\beta,p)}\,,
\\
\widehat{V}_{a}(x,\beta,p)&=\frac{1}{\widehat{S}_{1}(x,\beta,p)}\sum_{k=1}^{K} \frac{\widehat{U}_{a}(Y_{(k)},\beta,p)}{\widehat{U}_{1}(Y_{(k)},\beta,p)} \exp(\beta^T xY_{(k)})p_k\,.
\end{split}
\end{align}
Then, for every function $a(y,x)$, we propose the following estimating equation for $\beta$:
\begin{align}\label{eq:estimation_eq}
\frac{1}{n}\sum_{i=1}^n m_{n,a}\{Y_i,X_i,\beta,\widehat{p}(\beta)\}=0 \,,
\end{align}
where $m_{n,a}$ is defined similarly to $m_{a}$ in~\eqref{eq:m_with_suv} by replacing $S_{a}$, $U_{a}$ and $V_{a}$ with $\widehat{S}_{a}$, $\widehat{U}_{a}$, and $\widehat{V}_{a}$, respectively. An estimator of $G$ is then obtained by taking $\widehat G=\widehat G_{\widehat \beta}$. Note that, by Lemma~\ref{lem:efficient_score}, choosing $a(Y,X)=XY$ in $m_{a}$ yields estimating equations which are based on the approximated efficient score.

\section{Asymptotic Results -- the discrete setting}

Consider a discrete random variable $Y$ with finite support, and assume
\begin{enumerate}
	\renewcommand{\labelenumi}{(A\arabic{enumi})}
	
	\item $X$ takes values in a compact set $\mathcal{X}\subset\R^q$, and $\beta_0$ is an interior point of a bounded set $\mathcal{B}\subset \R^{q}$.\label{as:boundedXB}
	
	\item The function $E[m_{a}\{Y,X,\beta,p^*(\beta)\}]$ has a unique zero at $\beta_0$, and its derivative with respect to $\beta$ is invertible at $\beta_0$, where $p^*(\beta)$ is the limit of $\widehat{p}(\beta)$.\label{as:unique_z}
	
\end{enumerate}
Let $p_0\in\R^K$ be the vector of true probabilities of $(Y_{(1)},\ldots,Y_{(K)})$ given $X=0$.

\begin{thm}\label{thm:estimation}
	Under the assumptions above,  $\widehat{\beta}$ and $\widehat{p}(\widehat{\beta})$ are consistent estimators for $\beta_0$ and  $p_0$, respectively. Moreover, both $n^{1/2}(\widehat{\beta}-\beta_0)$ and  $n^{1/2}\{\widehat{p}(\widehat{\beta})-p_0\}$ converge to mean-zero Gaussian vectors.
\end{thm}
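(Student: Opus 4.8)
The plan is to view $(\widehat\beta,\widehat p(\widehat\beta))$ as the solution of a single stacked system of estimating equations and to apply the standard theory for $Z$-estimators \citep[see][Chapter~3]{Tsiatis2006Semiparametric}. The first block of the system is the estimating equation~\eqref{eq:estimation_eq} for $\beta$; the second block consists of the stationarity (first-order) conditions that characterize the profile maximizer $\widehat p(\beta)$ via Theorem~2 of \citet{luo_proportional_2012} (equivalently, the self-consistency fixed-point equations whose iteration was shown to converge by \citet{davidov_convergence_2013}). Because $Y$ has finite support and, by~(A1), $X$ ranges over a compact set and $\beta$ over a bounded set, the maps $(\beta,p)\mapsto\widehat S_a(\cdot,\beta,p),\widehat U_a(\cdot,\beta,p),\widehat V_a(\cdot,\beta,p)$, and hence $(\beta,p)\mapsto m_{n,a}(\cdot,\cdot,\beta,p)$, are infinitely differentiable with all derivatives bounded uniformly over the relevant compact region; in particular $\widehat S_1$ is bounded away from $0$ there, using that $\exp(\beta^Txy)$ is bounded above and below by positive constants and that, since $p_0$ lies in the interior of the simplex, $\widehat p(\beta)$ stays in a fixed compact subset of the interior with probability tending to one. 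Consequently the class of functions of $(Y,X)$ obtained by indexing over $(\beta,p)$ is a finite-dimensional Lipschitz class, hence both Glivenko--Cantelli and Donsker, and $\Psi(Y,X;\beta_0,p_0)$ has mean zero by correct specification of the model together with the score identity.

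First I would establish consistency. A uniform law of large numbers, together with the uniform-in-$\beta$ convergence of $\widehat p(\beta)$ to $p^*(\beta)$ (the empirical profile log-likelihood converges uniformly on the compact simplex to a continuous function with unique argmax) and the continuity of $m_a$ in $p$, gives $\sup_{\beta\in\mathcal B}\bigl\|\tfrac1n\sum_{i=1}^n m_{n,a}\{Y_i,X_i,\beta,\widehat p(\beta)\}-E[m_a\{Y,X,\beta,p^*(\beta)\}]\bigr\|\parrow 0$. By~(A2) the limit function has the unique zero $\beta_0$, which is well separated on the bounded set $\mathcal B$ by continuity; a standard argument, using that the limit map has nonsingular Jacobian at $\beta_0$, then shows that with probability tending to one a root $\widehat\beta$ of~\eqref{eq:estimation_eq} exists, lies eventually in the interior of $\mathcal B$, and satisfies $\widehat\beta\parrow\beta_0$. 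Then $\widehat p(\widehat\beta)\parrow p^*(\beta_0)=p_0$ by the uniform convergence of $\widehat p(\cdot)$ and continuity of $p^*(\cdot)$.

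For asymptotic normality I would expand the stacked empirical equation $\tfrac1n\sum_{i=1}^n\Psi(Y_i,X_i;\widehat\beta,\widehat p(\widehat\beta))=0$ about $(\beta_0,p_0)$, where $\Psi$ collects the two blocks above. Writing $\mathbb P_n$ for the empirical measure and $P$ for the true law, the Donsker property yields asymptotic equicontinuity of the empirical process, so $n^{1/2}(\mathbb P_n-P)\Psi(\cdot;\widehat\beta,\widehat p(\widehat\beta))=n^{1/2}(\mathbb P_n-P)\Psi(\cdot;\beta_0,p_0)+o_p(1)$; combining this with the mean-value expansion of $P\Psi(\cdot;\beta,p)$ and the consistency obtained above leads to
\[
n^{1/2}\begin{pmatrix}\widehat\beta-\beta_0\\ \widehat p(\widehat\beta)-p_0\end{pmatrix}=-A^{-1}\,n^{-1/2}\sum_{i=1}^n\Psi(Y_i,X_i;\beta_0,p_0)+o_p(1),
\]
with $A$ the derivative of $P\Psi$ at $(\beta_0,p_0)$. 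Since the summands are i.i.d.\ with mean zero and finite variance, the multivariate central limit theorem and Slutsky's lemma deliver the joint Gaussian limit, and reading off the two components gives the stated conclusions for $n^{1/2}(\widehat\beta-\beta_0)$ and $n^{1/2}\{\widehat p(\widehat\beta)-p_0\}$.

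The main obstacle is the invertibility of the Jacobian $A$. Writing $A$ in block form according to the $(\beta,p)$ split, the lower-right block is, up to sign, the Hessian of the limiting profile log-likelihood in $p$ at fixed $\beta_0$ restricted to the tangent space $\{\sum_k v_k=0\}$ of the simplex; this is the Fisher information for the discrete baseline and is nonsingular there, which is precisely the identifiability/contraction content underlying \citet{davidov_convergence_2013}. Its Schur complement in $A$ equals the derivative with respect to $\beta$ of $E[m_a\{Y,X,\beta,p^*(\beta)\}]$ obtained by implicitly differentiating the $p$-block, and this is invertible at $\beta_0$ by~(A2); hence $A$ is invertible and $-A^{-1}$ is well defined. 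A related technical point, handled by the same nonsingularity together with the implicit function theorem applied to the empirical $p$-block, is to justify that $\widehat p(\beta)$ admits a uniform-in-$\beta$ asymptotically linear expansion in a neighbourhood of $\beta_0$, which is what legitimizes the two-block expansion; the remaining verifications (boundedness, differentiability, and the Donsker/bracketing conditions) are routine given~(A1) and the finite support of $Y$.
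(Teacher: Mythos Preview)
Your overall strategy---profile out $p$, then argue via $Z$-estimation---is the same as the paper's, and your consistency sketch is essentially right.  But the asymptotic-normality argument has a genuine gap: the $\beta$-block of your stacked system is \emph{not} a sum of i.i.d.\ functions of $(Y_i,X_i)$ indexed by $(\beta,p)$.  Look at~\eqref{eq:S_U_V}: while $\widehat S_a(x,\beta,p)$ is a deterministic function of $(x,\beta,p)$, the quantities
\[
\widehat U_a(y,\beta,p)=\frac{1}{n}\sum_{j=1}^n\frac{a(y,X_j)\exp(\beta^T X_j y)}{\widehat S_1(X_j,\beta,p)}
\quad\text{and}\quad
\widehat V_a(x,\beta,p)
\]
are empirical averages over the \emph{entire} sample $X_1,\ldots,X_n$, so $m_{n,a}(Y_i,X_i,\beta,p)$ depends on all the data, not only on $(Y_i,X_i)$.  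Their population counterparts $U_a,V_a$ in~\eqref{eq:S_U_Vcontinuous} involve the unknown density $\eta(\cdot)$ of $X$, which is not one of your stacked parameters and is not finite-dimensional under~(A1).  Consequently the display
\[
n^{1/2}\begin{pmatrix}\widehat\beta-\beta_0\\ \widehat p(\widehat\beta)-p_0\end{pmatrix}=-A^{-1}\,n^{-1/2}\sum_{i=1}^n\Psi(Y_i,X_i;\beta_0,p_0)+o_p(1)
\]
is not available: there is no fixed $\Psi(\cdot;\beta_0,p_0)$ making the first block an i.i.d.\ sum, and the Donsker equicontinuity step $n^{1/2}(\mathbb P_n-P)\Psi(\cdot;\widehat\beta,\widehat p)=n^{1/2}(\mathbb P_n-P)\Psi(\cdot;\beta_0,p_0)+o_p(1)$ does not even parse for the random map $m_{n,a}$.

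What is missing is precisely the piece the paper isolates: one must control
\[
n^{1/2}\,\ep_n\bigl\{m_{n,a}(Y,X,\beta_0,p_0)-m_a(Y,X,\beta_0,p_0)\bigr\},
\]
and this is \emph{not} $o_p(1)$.  The paper shows it is a V-statistic (a double sum over $(i,j)$ coming from $\widehat U_a-U_a$ and $\widehat V_a-V_a$) that itself converges to a nondegenerate mean-zero Gaussian; this contributes an additional summand to the influence function and hence to the asymptotic variance.  Your sandwich formula omits this term.  To repair the argument, either (i) add the V-statistic analysis of $m_{n,a}-m_a$ as in the paper, or (ii) enlarge your stacked parameter to include the distribution of $X$ so that $U_a,V_a$ become honest functions of the unknowns---but then the problem is no longer finite-dimensional and the ``finite-dimensional Lipschitz class'' shortcut fails.
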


The asymptotic variance of $\widehat{\beta}$ can be estimated empirically by standard estimating-equations tools. However, the computation is rather complex. Instead, a bootstrap approach is recommended, which is justified by \citet[][Theorem 13.4]{Kosorok2008}. We conjecture that Theorem 1 holds for general distributions of $Y$.  Proving this is challenging. A typical first step is to show that uniformly in $\beta$, the profile likelihood maximizer $\widehat G_\beta$ converges to some limit $G_{\beta,0}$. In other words, one needs to show that uniformly in $\beta$, the random process $\argmax \frac{1}{n}\sum_{i=1}^{n}\text{pl} (Y_i,X_i,\beta,G)$ converges to a fixed limit, where $\text{pl}$ is the profile likelihood, and the maximization is taken over all step distribution functions with jumps at the sample points. However, the maximizer of the profile log-likelihood is given only implicitly as a solution of a nonlinear set of equations \citep[Theorem~2]{luo_proportional_2012}. Since no explicit solution is given for the maximizer, standard empirical process techniques are difficult to employ. This is different from proofs such as those in  \citet{murphy1997maximum} and \cite{luo_proportional_2012}, that use nonparametric maximum likelihood, since their proof requires convergence  only at a the value of the true parameters $(\beta_0,G_0)$.  This is also different from the locally semi-parametric proofs of \citet{chen2007semiparametric}  and \citet{tchetgen2010doubly} as a parametric model for $g$ is assumed.
\begin{proof}[Proof of Theorem~\ref{thm:estimation}]
	
	\textbf{Part 1: Convergence of $\widehat{p}(\beta)$ to a limit $p^*(\beta)$.} As explained in the proof of Theorem~2 of \citet{luo_proportional_2012}, for each fixed $\beta$, $\widehat{p}(\beta)$ is obtained by maximization the likelihood (2.2) of \citet{vardi1985empirical}. Note that for $\beta\neq\beta_0$, this maximization is carried out with respect to a misspecified model. Indeed, since $K$ is fixed, the log likelihood $l$ of one observation $(y,x)$ for a fixed $\beta$ is 
	\begin{align*}
	l(y,x;p,\beta)=\sum_{k=1}^K\indi{y=Y_{(k)}}\left\{\beta^T xY_{(k)}+\log(p_k)\right\} \sum_{k=1}^Kp_k\exp\{\beta^T xY_{(k)}\}\,.
	\end{align*}
	By Theorem~3.2 of \citet{white1982maximum}, for every fixed $\beta$, $n^{1/2}
	\{\widehat{p}(\beta)-p^*(\beta)\}$ converges to a mean-zero Gaussian vector. Note that $l(y,x;\beta,p)$ and its first and second derivatives are all continuous function of $\beta$, and as a result of Theorem~3.2 of \citet{white1982maximum}, the limit $p^*(\beta)$ is also continuous in~$\beta$.
	
	\textbf{Part 2: Consistency.} 
	The outline of the consistency proof is as follows. First, we define the $\widehat{\beta}$ as a zero of an estimating equation $\Psi_n(\beta)=0$. We show that $\Psi_n(\beta)$ converges uniformly to a function $\Psi(\beta)$ which has a unique zero at $\beta_0$ and has the property that if $\{\beta^{(m)}\}_{m=1}^{\infty}$ is any sequence for which $\Psi\left(\beta^{(m)}\right)\rightarrow 0$, then $\beta^{(m)}\rightarrow \beta_0$. By Theorem~2.10 of \citet{Kosorok2008}, this proves consistency of $\widehat{\beta}$ to $\beta_0$.

	By~\eqref{eq:estimation_eq}, the estimating equation is 
	\begin{align*}
	\Psi_n(\beta)\equiv\frac1n\sum_{i=1}^{n}\left[a(Y_i,X_i)-\frac{\widehat{S}_{a}\{X_i,\beta,\widehat{p}(\beta)\}}{\widehat{S}_{1}\{X_i,\beta,\widehat{p}(\beta)\}} -\frac{\widehat{U}_{a}\{Y_i,\beta,\widehat{p}(\beta)\}}{\widehat{U}_{1}\{Y_i,\beta,\widehat{p}(\beta)\}} +\widehat{V}_{a}\{X_i,\beta,\widehat{p}(\beta)\}\right]=0\,.
	\end{align*}
	Let $\Psi(\beta)\equiv E\left[m_{a}(Y,X,\beta,p^*(\beta))\right]$. By Assumption~(A\ref{as:unique_z}), $\Psi(\beta)$ has a unique zero at $\beta_0$. Since $p^*(\beta)$ is continuous and $E\{m_{a}(Y,X,\beta,p)\}$ is also continuous in both $\beta$ and $p$, so is $\Psi(\beta)$ as a composition of continuous functions. Hence, for any sequence $\{\beta^{(m)}\}$, if $\Psi(\beta^{(m)})\rightarrow 0$, then $\beta^{(m)}\rightarrow \beta_0$.
	
	We now prove that $\Psi_n(\beta)$ converges uniformly to $\Psi(\beta)$. Define
	\begin{align*}
	\mathcal{S}_a\equiv&\left\{S_{a}(X,\beta,p)\,:\, \beta\in\mathcal{B}, p\in\mathcal{P}\right\},\quad a=0,1\,,
	\\
	\mathcal{S}_{a}/\mathcal{S}_{1}\equiv&\left\{\frac{S_{a}(X,\beta,p)}{S_{1}(X,\beta,p)}\,:\, \beta\in\mathcal{B}, p\in\mathcal{P}\right\}\,,
	\end{align*}
	where $S_{1}$ and $S_{a}$ are defined in~\eqref{eq:S_U_V}, and $\mathcal{P}\equiv\left\{p\in\R^K: p_k\geq 0,\,\, \sum_{k=1}^K p_k=1\right\}$. By Corollary~9.32 of \citet{Kosorok2008}, the classes $\mathcal{S}_1$, $\mathcal{S}_a$, and $\mathcal{S}_{a}/\mathcal{S}_{1}$, are Donsker since by Assumption~(A\ref{as:boundedXB}), $\beta$, $p$ and $x$ are bounded, the exponent function is Lipschitz on compact sets, and the function $a$ is bounded. Hence
	\begin{align}\label{eq:DonskerS}
	\begin{split}
	\sup_{\beta\in\mathcal{B},p\in\mathcal{P}}\norm{\ep_n S_{a}(X,\beta,p)-E\left\{S_{a}(X,\beta,p)\right\}}&\rightarrow 0\,,
	\\
	\sup_{\beta\in\mathcal{B},p\in\mathcal{P}}\norm{\ep_n \frac{S_{a}(X,\beta,p)}{S_{1}(,\beta,p)}-E\left\{\frac{S_{a}(X,\beta,p)}{S_{1}(X,\beta,p)}\right\}}&\rightarrow 0\,,
	\end{split}
	\end{align}
	where $\ep_n$ is the empirical measure such that for every function $f$, $\ep_n f(Y,X)\equiv\sum_{i=1}^n f(Y_i,X_i)$.
	By Assumption~(A\ref{as:boundedXB}), $S_{1}$ is uniformly bounded from below by a positive constant. Using the same argument as above, $ \mathcal{U}_a\equiv\large\{a(y,x)\exp(\beta^T x y)/S_{1}(x,\beta,p)\large\}$ is Donsker. Hence,  
	\begin{align*}
	&\sup_{\substack{\beta\in\mathcal{B},p\in\mathcal{P} \\ y\in\{Y_{(1)},\ldots,Y_{(k)} \} }}\norm{\widehat{U}_{a}(y,\beta,p)-U_{a}(y,\beta,p)}
	\\
	&=\sup_{\substack{\beta\in\mathcal{B},p\in\mathcal{P} \\ y\in\{Y_{(1)},\ldots,
			Y_{(k)} \} }}\left\|\ep_n \frac{a(y,X)\exp(\beta^T X y)}{S_{1}(X,\beta,p)}  
	- E\left[\frac{a(y,X)\exp(\beta^T X y)}{S_{1}(X,\beta,p)}\right]\right\|\rightarrow 0\,.
	\end{align*}
	Applying Corollary~9.32(iv) of \citet{Kosorok2008} to the classes $\mathcal{U}_1$ and $\mathcal{U}_a$ yields that the quotient  $\mathcal{U}_a/\mathcal{U}_1$ is also Donsker. Hence, one can show that
	\begin{align}\label{eq:DonskerU}
	\begin{split}
	&\sup_{\beta\in\mathcal{B},p\in\mathcal{P} }
	\norm{\ep_n\frac{\widehat{U}_{a}(Y_i,\beta,p)}{\widehat{U}_{1}(Y_i,\beta,p)}-E\left\{\frac{U_{a}(Y,\beta,p)}{U_{1}(Y,\beta,p)}\right\}}\rightarrow 0\,.
	\end{split}
	\end{align}

	
	Similar arguments shows that $\mathcal{V}_a\equiv\{V_a(y,\beta,p): \beta\in\mathcal{B},p\in\mathcal{P} \}$
	is also Donsker and that
	\begin{align}\label{eq:DonskerV}
	\sup_{\beta\in\mathcal{B}}\|\ep_n\widehat{V}_{a}(y,\beta,p)-V_{a}(y,\beta,p)\| =o_p(1)\,.
	\end{align}
	Consequently, by the definitions of $m_{n,a}$ and $m_{a}$, and by Eqs~\eqref{eq:DonskerS},~\eqref{eq:DonskerU}, and~\eqref{eq:DonskerV},
	\begin{align}\label{eq:Psi_n-Psi}
	\begin{split}
	\sup_{\beta\in\mathcal{B}}\norm{\Psi_n(\beta)-\Psi(\beta)}
	\leq&\sup_{\beta\in\mathcal{B}}\|\ep_n
	m_{n,a}(Y,X,\beta,p_n(\beta))-E[m_{a}\{Y,X,\beta,p_n(\beta)\}]\|
	\\
	& +
	\sup_{\beta\in\mathcal{B}}\norm{E[m_{a}\{Y,X,\beta,p_n(\beta)\}]-E[m_{a}\{Y,X,\beta,p^*(\beta)\}]}
	\\
	\leq&\sup_{\beta\in\mathcal{B},p\in\mathcal{P}}\norm{\ep_n
		m_{n,a}(Y,X,\beta,p)-E\{m_{a}(Y,X,\beta,p)\}}
	\\
	& +
	\sup_{\beta\in\mathcal{B}}\norm{E[m_{a}\{(Y,X,\beta,p_n(\beta)\}]-E[m_{a}\{Y,X,\beta,p^*(\beta)\}]}
	=o_p(1)\,,
	\end{split}
	\end{align}
	which concludes the consistency proof.

	\textbf{Part 3: Normality.} For any function $f(y,x,\beta,p)$ define 
	$
	D_\beta f(y,x,\beta,p)\equiv \frac{\partial }{\partial\beta}f(y,x,\beta,p)
	$
	and define $D_p f(y,x,\beta,p)$ similarly. Define
	\begin{align*}
	D_{n} m_{n,a}\{y,x,\beta,\widehat{p}(\beta)\}&\equiv\frac{\partial }{\partial\beta}m_{n,a}\{y,x,\beta,\widehat{p}(\beta)\}\\
	&=D_\beta m_{n,a}\{y,x,\beta,\widehat{p}(\beta)\}+D_p m_{n,a}\{Y,X,\beta,\widehat{p}(\beta)\}\frac{\partial \widehat{p}(\beta)}{\partial\beta}\,,
	\end{align*}
	and define $D_0 m_{a}(y,x,\beta,p^*(\beta))$ similarly. 
	By using similar arguments to~\eqref{eq:Psi_n-Psi}, we get 
	\begin{align}\label{eq:convegence_div}
	\begin{split}
	&\sup_{\beta\in\mathcal{B},p\in\mathcal{P}}
	\norm{ \ep_n
		D_p m_{n,a}(Y,X,\beta,p)-E\{D_p m_{a}(Y,X,\beta,p)\}}=o_p(1)\,,
	\\
	&\quad\sup_{\beta\in\mathcal{B}}
	\;\;\norm{ \ep_n
		D_{n}m_{n,a}\{Y,X,\beta,\widehat{p}(\beta)\}-E[D_0 m_{a}\{Y,X,\beta,p^*(\beta)\}]}=o_p(1)\,.
	\end{split}
	\end{align}
	
	We have
	\begin{align*}
	0= \Psi_n(\widehat{\beta})=& \,\ep_n m_{n,a}\left\{Y,X,\widehat{\beta},\widehat{p}(\widehat{\beta})\right\}\\
	=&\,\ep_nm_{n,a}\left(Y,X,\beta_0,p_0\right)+\ep_nD_{p}m_{n,a}\left(Y,X,\beta_0,p_0\right)\left\{\widehat{p}(\beta_0)-p_0\right\}\\
	&+\ep_n D_nm_{n,a}\left(Y,X,\beta_0,p_0\right)(\widehat{\beta}-\beta_0)+o_p\left\{ (\widehat{\beta}-\beta_0)+n^{-1/2}\right\}
	\end{align*}
	since 
	$\widehat{p}(\beta_0)-p_0=O_p(n^{-1/2})$ by Part 1 above. Write
	\begin{align*}
	&\ep_n\left\{m_{n,a}(Y,X,\beta,p)-m_{a}(Y,X,\beta,p)\right\}
	\\
	&\quad=\ep_n\left\{\frac{U_{a}(Y,\beta,p)}{U_{1}(Y,\beta,p)} -\frac{\widehat{U}_{a}(Y,\beta,p)}{\widehat{U}_{1}(Y,\beta,p)}\right\}
	\\
	&\quad\quad+\ep_n\left[\frac{1}{S_{1}(X,\beta,p)}\sum_{k=1}^{K} \left\{\frac{\widehat{U}_{a}(Y_{(k)},\beta,p)}{\widehat{U}_{1}(Y_{(k)},\beta,p)}-\frac{U_{a}(Y_{(k)},\beta,p)}{U_{1}(Y_{(k)},\beta,p)}\right\} \exp\{\beta^T XY_{(k)}\}p_k\right]\,.
	\end{align*}
	Then
	\begin{align*}
	&\ep_n\left\{\frac{U_{a}(Y,\beta,p)}{U_{1}(Y,\beta,p)} -\frac{\widehat{U}_{a}(Y,\beta,p)}{\widehat{U}_{1}(Y,\beta,p)}\right\}
	\\
	&=\ep_n\left\{\frac{U_{a}(Y,\beta,p)\widehat{U}_{1}(Y,\beta,p)-\widehat{U}_{a}(Y,\beta,p)U_{1}(Y,\beta,p)}{U_{1}^2(Y,\beta,p)}\right\}+o_p(1)
	\\
	&=\frac1{n^2}\sumn\sum_{j=1}^{n}\frac{U_a(Y_i,\beta,p)u_1(Y_i,X_j,\beta,p)-U_1(Y_i,\beta,p)u_a(Y_i,X_j,\beta,p)}{U_1^2(Y_i,\beta,p)}+o_p(1)
	\end{align*}
	where
	\begin{align*}
	u_a(y,x,\beta,p)=\frac{a(y,x)\exp(\beta^T x y)}{S_{1}(x,\beta,p)}\,,
	\end{align*}
	and thus behaves like a V-statistic up to an $o_p(1)$ term. Using similar arguments, one can show that
	\begin{align}\label{eq:m_n-m}
	n^{1/2}\ep_n\left\{m_{n,a}(Y,X,\beta,p)-m_{a}(Y,X,\beta,p)\right\}
	\end{align} 
	converges to a mean-zero Gaussian vector. Hence, by~\eqref{eq:convegence_div}, 
	\begin{align*}
	0&= n^{1/2}\ep_nm_{a}\left\{Y,X,\beta_0,p_0\right\}
	\\
	&\quad+n^{1/2}\ep_n\left\{m_{n,a}(Y,X,\beta,p)-m_{a}(Y,X,\beta,p)\right\}
	\\
	&\quad+E\left\{ D_p m_{a}\left(Y,X,\beta_0,p_0\right)\right\}n^{1/2}\left\{\widehat{p}(\beta_0)-p_0\right\}
	\\
	&\quad+E\left[ D_0 m_{a}\left\{Y,X,\beta_0,\widehat{p}(\beta_0)\right\}\right]n^{1/2}(\widehat{\beta}-\beta_0)+o_p\left\{n^{1/2} (\widehat{\beta}-\beta_0)+1\right\}\,.
	\end{align*}
	Multiplying both sides of this equation by $-E\left[ D_0 m_{a}\left\{Y,X,\beta_0,\widehat{p}(\beta_0)\right\}\right]^{-1}$, and using the Donsker property for $\mathcal{S}_a/\mathcal{S}_1$, $\mathcal{U}_a/\mathcal{U}_1$ and $\mathcal{V}_a$, the fact that $\widehat{p}(\beta_0)-p_0$ converges to a mean-zero Gaussian vector, and~\eqref{eq:m_n-m}, we obtain that $n^{1/2} (\widehat{\beta}-\beta_0)$ converges to a Gaussian random vector. Finally,
	\begin{align*}
	n^{1/2}\left\{\widehat{p}(\widehat{\beta})-p_0\right\}=& n^{1/2}\left\{\widehat{p}(\widehat{\beta})-\widehat{p}(\beta_0)\right\}+n^{1/2}\left\{\widehat{p}(\beta_0)-p_0\right\}\\
	&=
	D_p \{p^*(\beta_0)\}n^{1/2}(\widehat{\beta}-\beta_0)+n^{1/2}\left\{\widehat{p}(\beta_0)-p_0\right\}+o_p(1)
	\,,
	\end{align*}
	which converges to a mean-zero Gaussian by Part~1 and the argument above.\end{proof}

\section{Incomplete and Sampling-Biased Data}
So far we assumed that the data $(Y_1,X_1),\ldots,(Y_n,X_n)$ are fully observed and identically distributed. In the literature, the proportional likelihood model~\eqref{eq:model1} with incomplete data was considered in a case by case manner. For example, \citet{chan_nuisance_2013} shows how to handle missing data when the probability of missingness has a specific form, namely, 
\begin{align}\label{eq:chan_decomposition}
\mbox{pr}\left(R=1|Y=y,X=x\right)= h_1(y)h_2(x)\,,
\end{align}
where $R$ is the indicator for non-missing data, and $h_1$ and $h_2$ are arbitrary functions.  He also considers the double-truncation setting when the truncation is independent of both $Y$ and $X$. \citet{zhu_likelihood_2014} discusses the right-censored setting when the censoring variable $C$ is independent of the pair $(Y,X)$. Other settings, such as selection-biased data where the randomization is not proper, were not studied so far.  

The estimating equation~\eqref{eq:estimation_eq} enables us to provide an omnibus solution for all the problems that discussed aboves. Indeed, when the selection probabilities are known or can be estimated, and similarly, when the censoring or truncation probabilities can be estimated, one can use the inverse weighing methods \citep{Robins94}. In the missing data and censoring settings, let $R_i$ be an indicator equals one for complete observations.  Let $W_i=\mbox{pr}(R_i=1 \mid Y_i, X_i)$, and let $\widehat{W}_i$ be a consistent estimator of $W_i$. For the sampling-biased setting, let $W_i$ be the sampling probability of observation $i$ and $R_i\equiv 1$. For a fixed $\beta$, let $\widehat{p}_W(\beta)$ be the weighted-profile-likelihood estimator obtained as the maximizer of
\begin{align*}
\sum_{i=1}^n\frac{R_i}{\widehat{W}_i}\left\{\indi{Y_i=Y_{(k)}}\left(\beta^T X_iY_{(k)} +\log p_k\right)-\log\sum_{k=1}^K
p_k\exp\left(\beta^T X_iY_{(k)}\right)\right\}\,.\end{align*}

Then, the solution $\widehat{\beta}_{W}$ of the estimating equation
\begin{align}\label{eq:estimation_eq_we}
\frac{1}{n}\sum_{i=1}^n \frac{R_i}{\widehat{W}_i}m_{n,a}\{Y_i,X_i,\beta,\widehat{p}_{W}(\beta)\}=0^{q\times 1}\,,
\end{align}
is a consistent estimator of $\beta_0$. Moreover, $\widehat{p}_W(\widehat{\beta}_W)$ is a consistent estimator of $G$. The finite-sample performance of this estimator for the missing-data setting is demonstrated in Section \ref{sec:simulation}.

\section{Simulation Study}\label{sec:simulation}

We compare our method to two existing methods: the MLE of \citet{luo_proportional_2012} and the pseudo-likelihood method of \citet{chan_nuisance_2013}.  The two scenarios of \citet{luo_proportional_2012} were considered. Specifically, the covariate vector consists of $X=(X_1,X_2)^T$, where $X_2$ follows a zero-mean normal distribution with standard deviation 0.5, and given $X_2$, $X_1$ follows the Bernoulli distribution with success probability $\exp(1-X_2)/\{1+\exp(1-X_2)\}$. The value of the true parameters are $\beta=(\beta_1,\beta_2)^T=(-1,-1)^T$. In Setting 1, $Y$ is continuous and the baseline density is defined by 
$$
g(y) = \{\Phi(0.5)\}^{-1} (0.5 \pi)^{-1/2} \exp\{-2(y-0.25)^2\} \;\;\;\;\;\; y \geq 0  \, , 
$$
where $\Phi$ is the standard normal cumulative distribution function. In Setting 2, $Y$ is discrete with 
$$ 
g(y) = (1+y)3^y \exp(-3)/\{4y! \} \;\;\;\;\;\; y=0,1,2,\ldots \, . 
$$ 
Each setting consists of 1000 replicates and sample sizes 100, 200, 400 and 800. We compare the bias in estimating $\beta_1$, $\beta_2$, and the distance 
$\int |\widehat{G}(t)-G(t)|dt$. The simulation results are summarized in Table~\ref{table_a} and Figure~\ref{fig_12}, in the Appendix.  The proposed estimator coincides with the maximum likelihood estimator of \citet{luo_proportional_2012}, and behaves similarly to that of \citet{chan_nuisance_2013}.  

Two additional settings, with missing covariates, are considered. In both settings, the full data were generated  as in Setting~1. The probability of observing complete data is
$$\mbox{pr}(R=1|Y,X_1,X_2)={\exp(1-X_2)}\{1+\exp(1-X_2)\}^{-1} \, ,$$ in Setting~3, and $$\mbox{pr}(R=1|Y,X_1,X_2)={\exp(1-X_2-2Y)}\{1+\exp(1-X_2-2Y)\}^{-1}$$
in Setting~4. Note that the missing probability in Setting~3 follows~\eqref{eq:chan_decomposition} and hence can be consistently estimated by ignoring the missing observations. This is no longer true for Setting~4. The simulation results are summarized in Table \ref{table_b} and Figure~\ref{fig_34}. While all three methods work similarly in Setting~3, only the proposed method succeeds in estimating $\beta$ consistently. Moreover, the bias of the proposed method in estimating~$G$ converges to zero, while for the other two methods the bias converges to a constant.


\begin{table}[]
	\caption{Simulation results with $\beta_0=(-1,-1)^T$: empirical mean (empirical standard deviation)\label{table_b}}
	{\begin{tabular}{llcccc}
			&	\multicolumn{5}{c}{Setting 3}  \\
			&            & 100          & 200          & 400          & 800          \\
			$\beta_1$    & Lou \& Tsai & -1.09 (0.94) & -0.99 (0.65) & -1.00 (0.43)    & -1.00 (0.29)    \\
			& Chan        & -1.10 (0.97)  & -1.00 (0.67)    & -1.01 (0.44) & -1.01 (0.30)  \\
			& Proposed    & -1.07 (0.95) & -0.98 (0.66) & -1.00 (0.44)    & -0.99 (0.30)  \\
			$\beta_2$    & Lou \& Tsai & -1.06 (0.90)  & -1.05 (0.61) & -1.02 (0.41) & -1.02 (0.27) \\
			& Chan        & -1.08 (0.92) & -1.06 (0.63) & -1.02 (0.42) & -1.02 (0.27) \\
			& Proposed    & -1.06 (0.90)  & -1.05 (0.62) & -1.02 (0.42) & -1.02 (0.27) \\
			Distance & Lou \& Tsai & 0.08 (0.04)  & 0.06 (0.03) & 0.06 (0.02) & 0.03 (0.01)  \\
			& Chan        & 0.08 (0.04)  & 0.06 (0.03)  & 0.04 (0.02)  & 0.03 (0.01)  \\
			& Proposed    & 0.08 (0.04)  & 0.06 (0.03)  & 0.04 (0.02)  & 0.03 (0.01)  \\
			&	\multicolumn{5}{c}{Setting 4}  \\
			$\beta_1$    & Lou \& Tsai & -1.06 (1.42) & -1.01 (0.93) & -1.00 (0.62)    & -1.01 (0.43) \\
			& Chan        & -1.08 (1.47) & -1.03 (0.96) & -1.00 (0.64)    & -1.02 (0.44) \\
			& Proposed    & -1.32 (4.32) & -1.21 (1.20)  & -1.09 (0.78) & -1.05 (0.53) \\
			$\beta_2$    & Lou \& Tsai & -1.61 (1.30)  & -1.56 (0.86) & -1.49 (0.58) & -1.47 (0.38) \\
			& Chan        & -1.65 (1.35) & -1.58 (0.89) & -1.50 (0.59)  & -1.48 (0.39) \\
			& Proposed    & -1.31 (1.33) & -1.16 (0.93) & -1.01 (0.61) & -0.94 (0.41) \\
			Distance & Lou \& Tsai & 0.13 (0.06)  & 0.12 (0.05)  & 0.11 (0.04)  & 0.11 (0.03)  \\
			& Chan        & 0.16 (0.06)  & 0.12 (0.05)  & 0.11 (0.04)  & 0.11 (0.03)  \\
			& Proposed    & 0.12 (0.01)   & 0.09 (0.07)  & 0.07 (0.05)  & 0.05 (0.03) 	\end{tabular}}
\end{table}

\begin{figure}[!t]
	\begin{center}
		\includegraphics[width=1\linewidth]{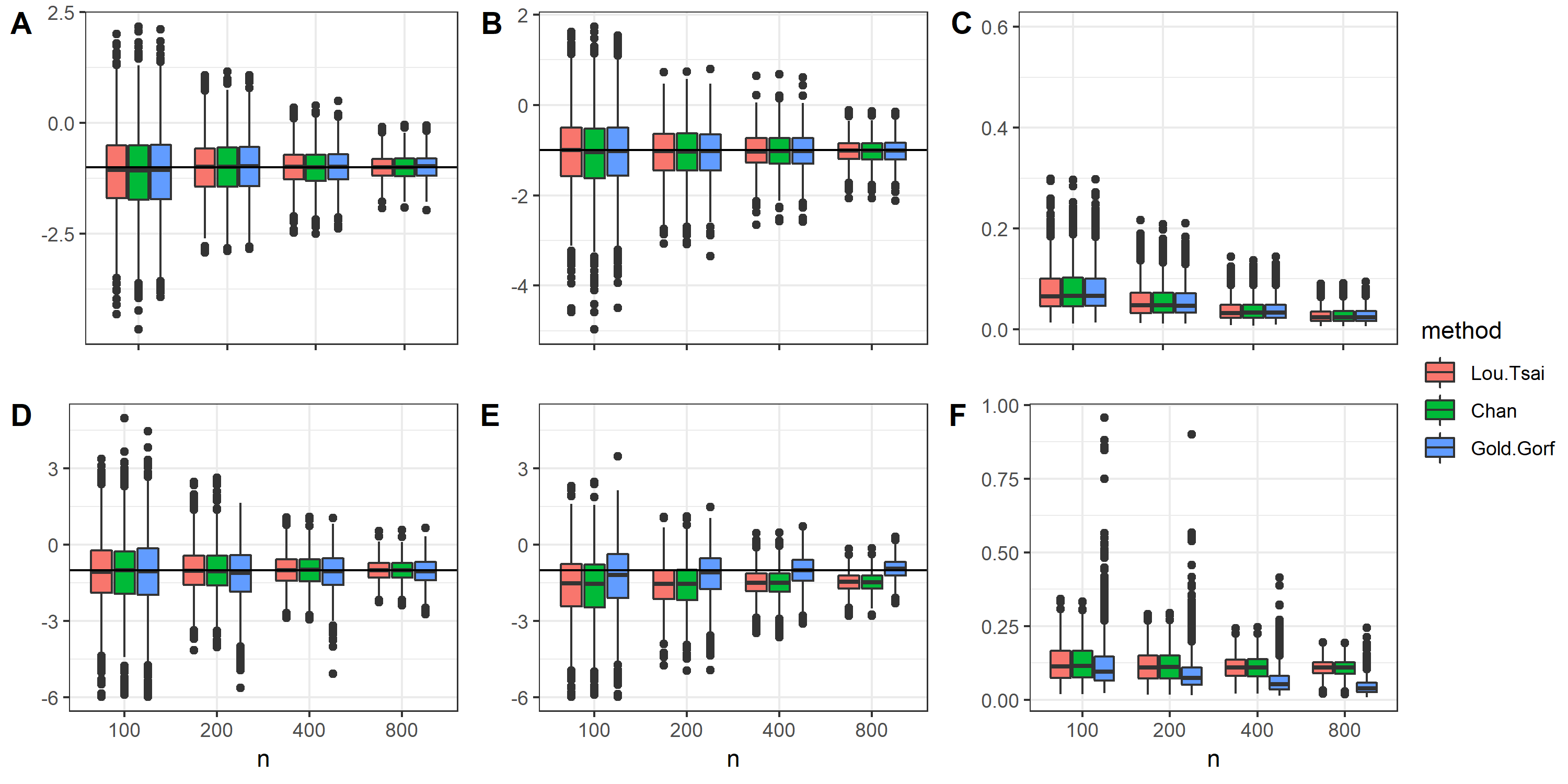}
		\caption{Results of Settings~3 and~4 are summarized in the top and the bottom lines, respectively. A~and~D present the bias related to $\beta_1$, B~and~E present that of $\beta_2$, and~C and~F  present the bias related to $G$. 
			\label{fig_34}}
	\end{center}
\end{figure}

\section*{Acknowledgement}
This research was supported by Grant No.~2016126 from the United States-Israel Binational Science Foundation (BSF). We would like to thank Anastasios Tsiatis and Eric J. Tchetgen Tchetgen for very helpful discussions.

\appendix
\section*{Appendix}\label{sec:proofs}
\begin{proof}[Proof of Lemma~\ref{lem:lambda_12}]
	Assertion~(ii) follows from ~\citet{Tsiatis2006Semiparametric}, Theorem~4.6. For assertion~(i), consider the parametric submodel
	\begin{align*}
	p_{y\mid  x}(y\mid  x;\beta_0,\gamma)=  \frac{ \exp(\beta_0^T  xy) g_0(y)\exp\{h(y)^T \gamma\}}{\int  \exp(\beta_0^T  xy) g_0(y)\exp\{h(y)^T \gamma\}dy}\,,
	\end{align*}
	where $\gamma\in\R^q$ is the nuisance parameter and $h(Y)$ is $q$-dimensional vector-valued bounded function. Clearly, the true model is obtained for $\gamma=0$. Moreover, $g_0(y)\exp\{h(y)^T \gamma\}$ is indeed a density of~$Y$. The score function with respect to this submodel is given by
	\begin{align*}
	S_\gamma(y,x)=&\left.\frac{\partial}{\partial \gamma}\log
	p_{y\mid  x}(y\mid  x;\beta_0,\gamma)\right|_{\gamma=0}
	\\
	=&\left.\frac{\partial}{\partial \gamma}\left[h(y)^T \gamma-\log\int  \exp(\beta_0^T  xy) g_0(y)\exp\{h(y)^T \gamma\}dy \right]\right|_{\gamma=0}
	\\
	=&\,h(y)-E\{h(Y)\mid  X=x\}\,.
	\end{align*}
	We have demonstrated that any element in $\Lambda_1$ defined above is an element of a parametric submodel nuisance tangent space. Therefore, to complete the proof we need to show that the linear space spanned by the score vector with respect to $\gamma$ for any parametric submodel is contained in $\Lambda_1$. The log-density with respect to a parametric submodel can be written as
	\begin{align*}
	\beta_0^T  xy+\log g(y;\gamma)-\log\int  \exp(\beta_0^T  xy) g(y;\gamma)dy +\log \eta_0(x)\,.
	\end{align*}
	Taking the derivative with respect to the parametric submodel $\gamma$ and substituting the true value of the parameter, denoted by $\gamma_0$, we obtain
	\begin{align*}
	S_{\gamma}(y,x)=&\,\frac{\frac{\partial}{\partial \gamma}g(y;\gamma_0)}{g(y;\gamma_0)}-\frac{\int  \exp(\beta_0^T  xy)\frac{\frac{\partial}{\partial \gamma} g(y;\gamma_0)}{g(y;\gamma_0)}g(y;\gamma_0)dy}{\int  \exp(\beta_0^T  xy) g(y;\gamma_0)dy}
	\\
	=&\,\frac{\frac{\partial}{\partial \gamma}g(y;\gamma_0)}{g(y;\gamma_0)}-E\left\{\left.\frac{\frac{\partial}{\partial \gamma}g(Y;\gamma_0)}{g(Y;\gamma_0)}\right|  X=x\right\}\,.
	\end{align*}
	Multiplying the score $S_{\gamma}$ by a conformable matrix yields an element of $\Lambda_1$, which concludes the proof.
\end{proof}

\begin{proof}[Proof of Theorem~\ref{thm:projections}]
	The second assertion follows from Lemma~4.3 of \citet{Tsiatis2006Semiparametric}. For the first assertion, since $\Lambda_1\bot\Lambda_2$, it is enough to first project $a(Y,X)$ on $\Lambda_2^{\bot}$ and then on $\Lambda_1\subseteq \Lambda_2^{\bot}$. By  assertion~(ii), $\Pi\{a(Y,X)\mid \Lambda_2^\bot\}=a(Y,X)-E\{a(Y,X)\mid X\}$. Thus it is enough to find the projection of functions of the form $ a(Y,X)-E\{ a(Y,X)\mid X\}$ on $\Lambda_1$, for functions $a$ such that $E\{ a(Y,X)\}=0$. Since all functions in $\Lambda_1$ are of the form $h(Y)-E\{h(Y)\mid X\}$ for some $h(Y)$, we would like to find a function $h^*(Y)$ such that
	\begin{align*}
	E\left\{\left( [a(Y,X)-E\{a(Y,X)\mid X\}]-[h^*(Y)-E\{h^*(Y)\mid X\}]\right)^T [h(Y)-E\{h(Y)\mid X\}]\right\}=0
	\end{align*}
	for all $h(Y)$. Since 
	\begin{align*}
	&E\left\{\left( [a(Y,X)-E\{a(Y,X)\mid X\}]-[h^*(Y)-E\{h^*(Y)\mid X\}]\right)^T E\{h(Y)\mid X\}\right\} \\
	&\quad = E\left[ E\left\{\left( [a(Y,X)-E\{a(Y,X)\mid X\}]-[h^*(Y)-E\{h^*(Y)\mid X\}]\right)^T E\{h(Y)\mid X\} \mid X \right\}   \right] = 0 \,
	\end{align*}
	it is enough to find $h^*(Y)$ such that 
	\begin{align*}
	E\left\{\left( [a(Y,X)-E\{a(Y,X)\mid X\}]-[h^*(Y)-E\{h^*(Y)\mid X\}]\right)^T h(Y) \right\}=0
	\end{align*}
	for all $h(Y)$ in $\Lambda_1$. This implies that
	\begin{align*}
	E\left\{\left( [a(Y,X)-E\{a(Y,X)\mid X\}]-[h^*(Y)-E\{h^*(Y)\mid X\}]\right) \mid Y \right\}=0.
	\end{align*}
	Equivalently, we would like to find $h^*(Y)$ that solves the integral equations
	\begin{align}\label{eq:hstar}
	(I-\CB)\{ h^*(Y)\} = \CK\{a(Y,X) \} \, , 
	\end{align}
	where the operators $I$, $\CK$ and $\CB$ are defined in Section~\ref{sec:theory}. 
	
	We now show that $\CB$ is a contraction operator,  that is $\left\|\CB \{a(Y,X)\} \right\| \leq 1-\epsilon$ for some $\epsilon > 0$ for all functions $a(Y,X)$ such that $\|a(Y,X)\|=1$. Denote by 
	\begin{align*}
	\Lambda_Y= \left\{h(Y):\, h(Y)\text{ is a $q$-dimensional vector-valued function}\right\}\,,\\
	\Lambda_X = \left\{\alpha(X)\,:\, \alpha(X)\text{ is a $q$-dimensional vector-valued function}\right\}\,.
	\end{align*}
	By~\citet[][Theorem~4.6]{Tsiatis2006Semiparametric}, $\CB\{a(Y,X)\}=\Pi\{\Pi(a(Y,X)|\Lambda_X)|\Lambda_Y\}$. For any $a(Y,X)$ such that $E[a(Y,X)]=0$, and $\|a(Y,X)\|=1$, by the Pythagorean theorem,
	\begin{align*}
	\|\CB\{a(Y,X)\}\|=\|\Pi\{\Pi(a(Y,X)|\Lambda_X)|\Lambda_Y\}\|\leq \|a(Y,X)\|=1\,.
	\end{align*} 
	Assume that  there is no positive $\eps$ such that $\left\|\CB \{a(Y,X)\} \right\| \leq (1-\epsilon) \left\| a(Y,X) \right\|$. Then, there is a sequence $\{a_n(Y,X)\}_{n=1}^\infty$ such that $\|a_n(Y,X)\|=1$ and $\lim_{n\rightarrow\infty} \|a_n(Y,X)\|=1 $. By Alaoglu's theorem \citep{weidmann2012linear}, every bounded sequence contains a weakly convergent subsequence. Let $a_0$ be a limit of such a subsequence. Since $\CB$ is a projection operator, $\|\CB\{a_0(Y,X)\}\|=\|a_0(Y,X)\|$ which implies that $a_0(Y,X)$ is a function only of $Y$. Note that 
	\begin{align*}
	1 =\|\CB\{a_0(Y,X)\}\|\equiv \|\Pi\{\Pi(a(Y,X)|\Lambda_X)|\Lambda_Y\}\|\leq \|\Pi(a(Y,X)|\Lambda_X)\| \leq \|a(Y,X)\|=1\,.
	\end{align*} 
	Hence $\|\Pi(a(Y,X)|\Lambda_X)\|=1$ which implies that $a_0(Y,X)$ is a function only of $X$. Since the only function that can satisfies both conditions is a constant function, and since this function needs to have zero expectation, we arrive at a contradiction.
	
	By \citet[Lemma 10.5]{Tsiatis2006Semiparametric}, since $\CB$ is a contraction operator, $
	h^*(Y) = \sum_{j=0}^{\infty}\CB^j \CK\{a(Y,X)\}
	$,
	which concludes the proof.
\end{proof}

\begin{proof}[Proof of Lemma~\ref{lem:efficient_score}]
	We first compute the score function for $\beta$:
	\begin{align*}
	S_\beta(Y,X)=&\frac{\partial}{\partial\beta}\log p_{Y,X}\{y,x;\beta,g_0(\cdot),\eta_0(\cdot)\} \mid_{\beta_0}
	\\
	=&\left.\frac{\partial}{\partial\beta}\log\left\{\frac{ \exp(\beta^T XY) g_0(Y)}{\int  \exp(\beta^T XY) dG_0(Y)}\eta_0(X)\right\}\right|_{\beta=\beta_0}
	\\
	=&\left.\frac{\partial}{\partial\beta}\left[\beta^T XY+\log g_0(Y)-\log\left\{\int  \exp(\beta^T XY) dG_0(Y)\right\}+\log\{\eta_0(X)\}\right]\right|_{\beta=\beta_0}
	\\
	=&XY -\left.\frac{\partial}{\partial\beta}\left\{\frac{\int  XY\exp(\beta^T XY) dG_0(Y)}{\int  \exp(\beta^T XY) dG_0(Y)}\right\}\right|_{\beta=\beta_0}
	\\
	=&XY-E(XY|X)\,.
	\end{align*}
	The result follows from Corollary~\ref{cor:proj_on_lambda_bot}.
\end{proof}

\begin{table}[]
	\caption{Simulation results with $\beta_0=(-1,-1)^T$: empirical mean (empirical standard deviation)\label{table_a} }
	{\begin{tabular}{llcccc}
			&	\multicolumn{5}{c}{Setting 1}  \\
			&            & 100          & 200          & 400          & 800          \\
			$\beta_1$    & Lou \& Tsai & -1.06 (0.77) & -1.00 (0.55)    & -1 (0.37)    & -1.01 (0.25) \\
			& Chan        & -1.07 (0.78) & -1.01 (0.56) & -1.00 (0.38)    & -1.01 (0.25) \\
			& Proposed    & -1.06 (0.77) & -1.00 (0.55)    & -1.00 (0.37)    & -1.01 (0.25) \\
			$\beta_2$    & Lou \& Tsai & -1.05 (0.71) & -1.03 (0.51) & -1.01 (0.33) & -1.01 (0.22) \\
			& Chan        & -1.07 (0.72) & -1.03 (0.53) & -1.02 (0.34) & -1.01 (0.23) \\
			& Proposed    & -1.05 (0.71) & -1.03 (0.51) & -1.01 (0.33) & -1.01 (0.22) \\
			Distance & Lou \& Tsai & 0.07 (0.04) & 0.05 (0.03) & 0.03 (0.02) & 0.02 (0.01) \\
			& Chan        & 0.07 (0.04) & 0.05 (0.03) & 0.03 (0.02) & 0.02 (0.01) \\
			& Proposed    & 0.07 (0.04) & 0.05 (0.03) & 0.03 (0.02) & 0.02 (0.01) \\
			&	\multicolumn{5}{c}{Setting 2}  \\
			&             & 100          & 200          & 400          & 800          \\
			$\beta_1$    & Lou \& Tsai & -1.06 (0.24) & -1.03 (0.16) & -1.02 (0.11) & -1.01 (0.08) \\
			& Chan        & -1.05 (0.26) & -1.02 (0.17) & -1.01 (0.12) & -1.01 (0.08) \\
			& Proposed    & -1.06 (0.24) & -1.03 (0.16) & -1.02 (0.11) & -1.01 (0.08) \\
			$\beta_2$     & Lou \& Tsai & -1.07 (0.23) & -1.04 (0.16) & -1.02 (0.10)  & -1.01 (0.08) \\
			& Chan        & -1.05 (0.25) & -1.03 (0.16) & -1.01 (0.11) & -1.01 (0.08) \\
			& Proposed    & -1.07 (0.23) & -1.04 (0.16) & -1.02 (0.10)  & -1.01 (0.08) \\
			Distance & Lou \& Tsai & 0.39 (0.20)  & 0.28 (0.14) & 0.20 (0.10)  & 0.13 (0.07) \\
			& Chan        & 0.40 (0.21) & 0.29 (0.14) & 0.20 (0.10)  & 0.14 (0.07) \\
			& Proposed    & 0.39 (0.20)  & 0.28 (0.14) & 0.20 (0.10)  & 0.13 (0.07)
	\end{tabular}}
\end{table}

\begin{figure}[!t]
	\begin{center}
		\includegraphics[width=1\linewidth]{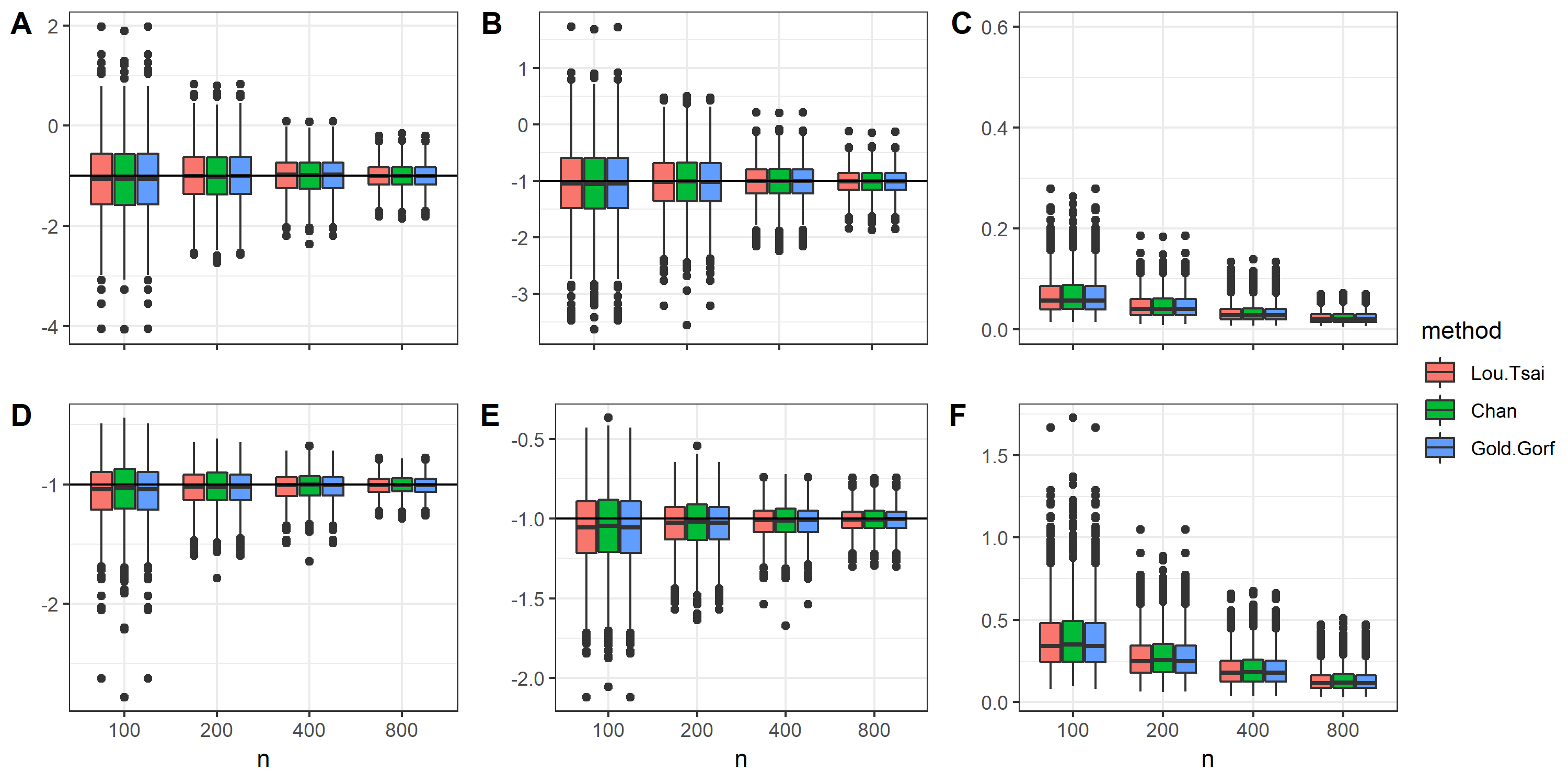}
		\caption{Results of Settings~1 and~2 are summarized in the top and the bottom lines, respectively. A~and~D present the bias related to $\beta_1$, B~and~E present that of $\beta_2$, and~C and~F  present the bias related to $G$. 
			\label{fig_12}}
	\end{center}
\end{figure}


\bibliographystyle{plainnat}
\bibliography{PLR}

\end{document}